\definecolor{verylight}{gray}{0.97}
\definecolor{light}{gray}{0.93}
\definecolor{medium}{gray}{0.82}
 \def\ab{{\bold a}}
 \def\xb{{\bold x}}
 \def\opn#1#2{\def#1{\operatorname{#2}}} 
 \opn\chara{char} \opn\length{\ell} \opn\pd{pd} \opn\rk{rk}
 \opn\projdim{proj\,dim} \opn\injdim{inj\,dim} \opn\rank{rank}
 \opn\depth{depth} \opn\grade{grade} \opn\height{height}
 \opn\embdim{emb\,dim} \opn\codim{codim}
 \opn\Tr{Tr} \opn\bigrank{big\,rank}
 \opn\superheight{superheight}\opn\lcm{lcm}
 \opn\trdeg{tr\,deg}
 \opn\reg{reg} \opn\lreg{lreg} \opn\ini{in} \opn\lpd{lpd}
 \opn\size{size} \opn\sdepth{sdepth}
 \opn\link{link}\opn\fdepth{fdepth}\opn\lex{lex}
 \opn\BG{BG} \opn\multideg{multideg}  \opn\LG{LG}
 \opn\div{div} \opn\Div{Div} \opn\cl{cl} \opn\Cl{Cl}
 \opn\Spec{Spec} \opn\Supp{Supp} \opn\supp{supp} \opn\Sing{Sing}
 \opn\Ass{Ass} \opn\Min{Min}\opn\Mon{Mon}
 \opn\Ann{Ann} \opn\Rad{Rad} \opn\Soc{Soc}
 \opn\Im{Im} \opn\Ker{Ker} \opn\Coker{Coker} \opn\Am{Am}
 \opn\Hom{Hom} \opn\Tor{Tor} \opn\Ext{Ext} \opn\End{End}
 \opn\Aut{Aut} \opn\id{id}
 \opn\nat{nat}
 \opn\pff{pf}
 \opn\Pf{Pf} \opn\GL{GL} \opn\SL{SL} \opn\mod{mod} \opn\ord{ord}
 \opn\Gin{Gin} \opn\Hilb{Hilb}\opn\sort{sort}
\opn\A{A}
 \opn\aff{aff} \opn
\opn\relint{relint} \opn\st{st}
 \opn\lk{lk} \opn\cn{cn} \opn\core{core} \opn\vol{vol}
 \opn\link{link} \opn\star{star}\opn\lex{lex}\opn\set{set}
 \opn\gr{gr}
 \def\pot#1#2{#1[\kern-0.28ex[#2]\kern-0.28ex]}
 \opn\dirlim{\underrightarrow{\lim}}
 \opn\inivlim{\underleftarrow{\lim}}
 \let\union=\cup
 \def\Implies{\ifmmode\Longrightarrow \else
         \unskip${}\Longrightarrow{}$\ignorespaces\fi}
 \def\implies{\ifmmode\Rightarrow \else
         \unskip${}\Rightarrow{}$\ignorespaces\fi}
 \def\iff{\ifmmode\Longleftrightarrow \else
         \unskip${}\Longleftrightarrow{}$\ignorespaces\fi}
 \newtheorem{Theorem}{Theorem}[section]
 \newtheorem{Lemma}[Theorem]{Lemma}
 \newtheorem{Corollary}[Theorem]{Corollary}
 \let\epsilon\varepsilon
 \let\kappa=\varkappa
 \def\qed{\ifhmode\textqed\fi
       \ifmmode\ifinner\quad\qedsymbol\else\dispqed\fi\fi}
 \def\textqed{\unskip\nobreak\penalty50
        \hskip2em\hbox{}\nobreak\hfil\qedsymbol
        \parfillskip=0pt \finalhyphendemerits=0}
 \def\dispqed{\rlap{\qquad\qedsymbol}}
 \opn\dis{dis}
 \def\pnt{{\raise0.5mm\hbox{\large\bf.}}}
 \opn\Lex{Lex}
\begin{document}

 \title {Multigraded Shifts of Matroidal Ideals}

 \author {Shamila Bayati}

\address{Faculty of Mathematics and Computer Science\\
Amirkabir University of Technology (Tehran Polytechnic)\\ Tehran
15914\\ Iran}
\address{School of Mathematics\\ Institute for Research in Fundamental
Sciences (IPM) P. O. Box: 19395-5746\\ Tehran\\ Iran}

\email{shamilabayati@gmail.com}

 \begin{abstract}
In this paper, we show that if $I$ is a matroidal ideal, then the  ideal  generated by   the $i$-th multigraded shifts  is also a matroidal ideal for every $i=0,\ldots,\projdim(I)$.
 \end{abstract}
\subjclass[2010]{13D02, 13A02, 05B35, 05E40}
\keywords{Adjacency ideal; Free resolutions; Matroid basis graphs; Matroidal ideals;  Multigraded shifts }

 \maketitle

 \section*{Introduction}
Matroids, as an interesting subject at the center of combinatorics,  made their way into commutative algebra through several ways including \textit{matroidal ideals}. So it has been of interest to study free resolution of (poly)matroidal ideals and related invariants, see \cite{Ch,CH,JRV}. See also \cite{NPS,T} where matroids and the theory of free resolutions are applied in favor of each other.  If, in particular, we consider the multigraded free resolution which reflects the combinatorial structure, then  more  combinatorial information encoded in the resolution will be found out. 

\medskip

In this paper, our goal is to investigate whether the property of being matroidal is inherited by the ideals generated by multigraded shifts; a question which first came up about the mutligraded shifts of polymatroidal ideals  during a discussion with J\"{u}rgen Herzog and Somayeh Bandari  when we met   in Essen in 2012.

\medskip

 Let  $S=k[x_1,\ldots, x_n]$ be the polynomial ring in the variables $x_1,\ldots, x_n$ over a field $k$. We consider this ring with its natural multigrading where  $\xb^{\ab}=x_1^{a_1}\ldots x_n^{a_n}$ is the unique monomial with multidegree $(a_1,\ldots a_n)$. Throughout, a monomial and its multidegree will be used interchangeably, and $S(\xb^\ab)$ will denote the free $S$-module with one generator of multidegree $\xb^{\ab}$.    A monomial ideal $I\subseteq S$ has a (unique up to isomorphism) minimal multigraded resolution
\[
\mathbf{F}: 0 \rightarrow F_p \rightarrow \ldots \rightarrow F_1 \rightarrow F_0
\]
with
$$F_i=\bigoplus_{{\ab}\in \mathbb{Z}^n}S(\xb^\ab)^{\beta_{i,\ab}}.$$
The set of \textit{$i$-th multigraded shifts} of $I$ is
\[
\{{\xb}^{\ab}| \,\, \beta_{i,\ab}\neq 0 \, \}.
\]
The  ideal generated by the $i$-th multigraded shifts of $I$  will be denoted by $J_i(I)$. We show that $J_i(I)$  can be obtained by taking $i$ times iterated  adjacency ideals starting from  $I$; see Section 2 for relevant definitions. On the other hand, we show that the adjacency ideal of a matroidal ideal is also matroidal. Therefore, we conclude that  if $I$ is a matroidal ideal, then  $J_i(I)$  generated by  its set of $i$-th multigraded shifts  is also a matroidal ideal for every $i=0,\ldots,\projdim(I)$.

 \section{Preliminaries}
In this section, we explain some terminology and facts that we shall use in this  paper.

\medskip

\noindent
\textit{1.1. Multigraded shifts for ideals with linear quotients}

\medskip

Let  $I$ be a monomial ideal in $S=k[x_1,\ldots,x_n]$. We denote its minimal set of monomial generators by $G(I)$. The ideal $I$ is said to have linear quotients if there exists an ordering $m_1,\ldots,m_r$ of the elements of $G(I)$ such that for all $i=1,\ldots,r-1$, the colon ideal $(m_1,\ldots,m_i):(m_{i+1})$ is generated by a subset of $\{x_1,\ldots , x_n\}$. After having fixed such an ordering, the set of each $m_i$ is defined to be 
\[
\set(m_i)=\{k\in [n] \: \,\, x_k\in  (m_1,\ldots,m_i):(m_i) \, \}.
\]

 Let $I$ have linear quotients with order $m_1,\ldots,m_r$ of the elements of $G(I)$. By \cite[Lemma 1.5]{HT} a minimal multigraded free resolution $\mathbf{F}$ of $I$ by mapping cone constructions can be described    as follows:  the $S$-module $F_i$ in  homological degree $i$ of $\mathbf{F}$ is the multigraded free $S$-module whose basis is formed by monomials
$m\xb^{\ab}$  with 
\begin{quote}
\textit{ $m\in G(I)$  and     squarefree monomial  $\xb^{\ab}$  such that $\supp(\xb^{\ab})$ is a subset of $\set(m)$  of cardinality 
 $i$.}
\end{quote}

\medskip

 Recall that we use monomials and their multidegree interchangeably, as well as squarefree monomials and their support. So in the rest of this paper, we will use some notions related to  squarefree monomials for the subsets of $[n]$. In particular, if $B\subseteq [n]$ is the support of a squarefree monomial $m$, we write $\set(B)$ instead of $\set(m)$.

\medskip

\noindent
 \textit{1.2. Matroids}

\medskip
In this subsection, we give the definition of matroids and some basic facts about them. See \cite{O} for a detailed materiel on matroid theory.

\medskip

 We denote  the union of sets $B\cup \{b\}$ by $B+b$, and the difference of sets $B\setminus \{b\}$ by $B-b$.

 A \textit{matroid} $\mathcal{M}$ on a finite set $E$ is a nonempty  collection $\mathcal{B}$ of subsets of $E$, called \textit{bases}, with the following \textit{exchange property}:
\begin{quote}
If $B_1,B_2\in \mathcal{B}$ and $b_1\in B_1\setminus B_2$, then there exists $b_2\in B_2\setminus B_1$ such that $B_1-b_1+b_2\in \mathcal{B}$.
\end{quote}

One may refer to this matroid by $\mathcal{M}=(E,\mathcal{B})$. If $B_2=B_1-b_1+b_2$, it is said that $B_2$ is obtained from $B_1$ by a pivot step: $b_1$ is pivoted out and $b_2$ is pivoted in. It is known that bases of $\mathcal{M}$ possess the following \textit{symmetric exchange property}:
\begin{quote}
If $B_1,B_2\in \mathcal{B}$ and $b_1\in B_1\setminus B_2$, then there exists $b_2\in B_2\setminus B_1$ such that $B_1-b_1+b_2$ and $ B_2-b_2+b_1$ are both in $\mathcal{B}$.
\end{quote}

 It is a known fact that all the bases of a matroid have the same cardinality.
Associated to a matroid $\mathcal{M}=(E,\mathcal{B})$, its \textit{basis graph} $\BG(\mathcal{M})$ is defined as follows: the vertex set of $\BG(\mathcal{M})$ is $\mathcal{B}$,   and two vertices $B_1$ and $B_2$ are adjacent if $|B_1\setminus B_2|=|B_2\setminus B_1|$=1.
If $B_2$   is obtained from    $B_1$ by $b_1$  pivoted out and $b_2$  pivoted in, then an edge label $b_1b_2$  represents  this pivot step diagrammatically as follows:
\begin{center}
\begin{pspicture}(-2,-0.5)(2,1)
\psdots(-1.4,0)(1.2,0)
\psline(-1.4,0)(1.2,0)
\rput(-1.87,0){$B_1$}
\rput(1.87,0){$B_2$}
\rput(0,0.4){$b_1b_2$}
\end{pspicture}
\end{center}
\noindent
 Due to the above mentioned exchange property, matroid basis graphs are connected. A graph  is called a \textit{matroid basis graph} if its vertices could be labeled by bases of some matroid to be its basis graph.

\medskip

Let $G=(V,E)$ be a graph. Recall that the \textit{distance} $d(v_1,v_2)$ between two vertices $v_1,v_2\in V$  is the length of a shortest path between them, if one exists. An \textit{induced subgraph} of $G$ is a graph whose set of vertices is a subset $W$ of $V$ and two vertices of $W$ are adjacent exactly when they are adjacent in $G$. 

\medskip

The following result is clear, for example by the argument applied in the proof of \cite[Lemma 1.4]{Maurer} or directly by the symmetric exchange property for bases of a matroid.

\begin{Lemma}
\label{d=2}
Let $\mathcal{M}=(E,\mathcal{B})$ be a matroid. Suppose that $B_1, B_2\in \mathcal{B}$ are two vertices of the associated basis graph at a distance of two, and   
\[
B_2=B_1-(e_1+e_2)+(f_1+f_2).
\]
 Then $B_1$ and $B_2$  have at least two common  neighbors   
 $C_1, C_2\in \mathcal{B}$  which on the way from $B_1$ to $B_2$,   $e_1$ is pivoted out from one of them and  $e_2$ from the other, and also $f_1$ is pivoted in one of them and  $f_2$ in the other. 
\end{Lemma}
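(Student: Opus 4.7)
My plan is to exploit the symmetric exchange property, since the statement is essentially a statement about how pairs of elements swap simultaneously between $B_1$ and $B_2$. First I would observe that since $d(B_1,B_2)=2$ we have $|B_1\triangle B_2|=4$ with $B_1\setminus B_2=\{e_1,e_2\}$ and $B_2\setminus B_1=\{f_1,f_2\}$, and a routine cardinality check shows that every common neighbor $C$ of $B_1$ and $B_2$ must have the form $C=B_1-e_a+f_b$ for some $a,b\in\{1,2\}$; so there are only the four candidate vertices $B_1-e_a+f_b$ to consider, and the claim amounts to finding two of them, in ``diagonal'' position, that are bases.

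Next I would use symmetric exchange in its swapped form: given $f_1\in B_2\setminus B_1$, there exists $e\in B_1\setminus B_2$ with $B_1-e+f_1\in\mathcal{B}$; likewise given $f_2$, there exists $e'\in B_1\setminus B_2$ with $B_1-e'+f_2\in\mathcal{B}$. If $e\neq e'$ we are already done: the two bases $B_1-e+f_1$ and $B_1-e'+f_2$ are common neighbors realising the four elements $e_1,e_2,f_1,f_2$ as required.

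The only case left is $e=e'$, say $e=e'=e_1$, where symmetric exchange has only produced common neighbors pivoting $e_1$ out. To handle this I would apply symmetric exchange once more, now starting from $e_2\in B_1\setminus B_2$: this yields some $f\in\{f_1,f_2\}$ with $B_1-e_2+f\in\mathcal{B}$. If $f=f_2$, pair this with $B_1-e_1+f_1$; if $f=f_1$, pair it instead with $B_1-e_1+f_2$. Either way we obtain the desired pair $C_1,C_2$ of common neighbors in which $e_1,e_2$ are pivoted out from different vertices and $f_1,f_2$ are pivoted in at different vertices.

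The main obstacle I foresee is precisely this last ``diagonal matching'' issue: a single application of the symmetric exchange property only guarantees one common neighbor, and two applications might return the same $e_a$ (or the same $f_b$), so one has to be a bit careful to combine the three invocations above so that both indices get paired against both $f_j$'s. Once that bookkeeping is done the lemma follows with no additional input.
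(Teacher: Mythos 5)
Your argument is correct: the reduction to the four candidates $B_1-e_a+f_b$, the two applications of symmetric exchange from $f_1$ and $f_2$, and the third application from $e_2$ in the degenerate case $e=e'$ together do produce a ``diagonal'' pair of common neighbors as required. The paper gives no written proof, declaring the lemma clear either from Maurer's argument or directly from the symmetric exchange property; your proposal is precisely a careful write-up of the latter route, so it matches the paper's intended approach.
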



A squarefree monomial ideal $I$ in $S=k[x_1,\ldots,x_n]$  is said to be  a \textit{matroidal ideal} if $$\{\supp(m)| m\in G(I)\}$$ is the set of bases for some matroid. Matroidal ideals are generated in a single degree, and by \cite[Theorem 1.3]{MM} they have linear quotients with respect to the  lexicographical
order of the generators with $x_1>x_2>\cdots>x_n$.

\section{Multigraded shifts of matroidal ideals}
Let  $I$ be a  monomial ideal in $S=k[x_1,\ldots,x_n]$ generated in a single degree. Suppose that $G(I)=\{m_1,\ldots m_r\}$,  and consider the distance between these monomials in the sense of \cite{CH}, that is,
\[
d(m_i,m_j)=\frac{1}{2}\sum_{k=1}^n|\nu_k(m_i)-\nu_k(m_j)|,
\]
where for a monomial $m=x_1^{a_1}\ldots x_n^{a_n}$, one has $\nu_k(m)=a_k$. We associate a graph $G_I$ to $I$  whose  set of vertices is the set of generators  $G(I)$, and two vertices $m_i$ and $m_j$ are adjacent if $d(m_i,m_j)=1$. In particular, $G_I$   can be considered as a matroid basis graph when $I$ is a matroidal ideal.  We define the \textit{adjacency ideal  } of $I$, denoted by $\A(I)$, to be the monomial ideal generated by the  least common multiples of  adjacent vertices in $G_I$, that is,
\[
\A(I)=\langle \lcm(m_i,m_j):\,d(m_i,m_j)=1 \rangle \subseteq k[x_1,\ldots,x_n].
\]
 One should notice that when $I$ is an ideal generated in a  single degree with linear quotients, then  the ideal $J_1(I) $, generated by its set of first multigraded shifts, is exactly  its adjacency ideal. In particular, this implies that if $I$ is a squarefree monomial ideal, then so is $J_1(I)$.

\begin{Lemma}
\label{label.edge}
Let $I$ be a matroidal ideal. Then its adjacency ideal $\A(I)$ is also a matroidal ideal.
\end{Lemma}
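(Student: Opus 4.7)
First I would recast $\A(I)$ in matroid language. Let $\mathcal{M}$ be the matroid on $[n]$ whose bases $\mathcal{B}$ are the supports of the generators of $I$; all bases have common cardinality $d$. Two monomials $m_i, m_j \in G(I)$ satisfy $d(m_i, m_j) = 1$ exactly when their supports $B_i, B_j$ are adjacent bases, and then $\supp(\lcm(m_i, m_j)) = B_i \cup B_j$ has cardinality $d+1$. Conversely, any $(d+1)$-subset $C$ containing two distinct bases $B, B'$ automatically satisfies $B \cup B' = C$ (since $|B|=|B'|=d$ and they fit in a set of size $d+1$), so $B, B'$ are adjacent. Hence the supports of $G(\A(I))$ are exactly the members of
\[
\mathcal{B}' \;=\; \{C \subseteq [n] : |C| = d+1 \text{ and } C \text{ contains at least two members of } \mathcal{B}\},
\]
and it suffices to verify the basis exchange axiom for $\mathcal{B}'$.

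Given $C_1, C_2 \in \mathcal{B}'$ and $c_1 \in C_1 \setminus C_2$, I would produce $c_2 \in C_2 \setminus C_1$ with $C_1 - c_1 + c_2 \in \mathcal{B}'$, splitting on whether $C_1 - c_1 \in \mathcal{B}$. If $B^* := C_1 - c_1$ is a basis, pick any $c_2 \in C_2 \setminus C_1$; by the covering remark above, $c_2$ lies in some basis $B_2 \subseteq C_2$. Applying the basis exchange axiom of $\mathcal{M}$ to $B^*$ and $B_2$ at the element $c_2 \in B_2 \setminus B^*$ produces $y \in B^* \setminus B_2$ with $B^* - y + c_2 \in \mathcal{B}$, and then $C_1 - c_1 + c_2 = B^* + c_2$ contains the two distinct bases $B^*$ and $B^* - y + c_2$, so lies in $\mathcal{B}'$.

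The main obstacle is the remaining case $C_1 - c_1 \notin \mathcal{B}$. Fix a basis $B_1 \subseteq C_1$ and write $C_1 = B_1 + e$; the case hypothesis forces $c_1 \neq e$ (else $C_1 - c_1 = B_1 \in \mathcal{B}$), so $c_1 \in B_1$, and $c_1 \notin C_2$ gives $c_1 \in B_1 \setminus B_2$ for any basis $B_2 \subseteq C_2$. Basis exchange applied to $B_1, B_2$ at $c_1$ returns some $z \in B_2 \setminus B_1$ with $B_1 - c_1 + z \in \mathcal{B}$. The crucial observation is that $z$ cannot be $e$: otherwise $B_1 - c_1 + e = C_1 - c_1$ would be a basis, contradicting the case hypothesis. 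Therefore $z \notin B_1 + e = C_1$, so $c_2 := z$ lies in $C_2 \setminus C_1$. It remains to see that $C_1 - c_1 + c_2$ belongs to $\mathcal{B}'$: this set has size $d+1$, contains the basis $B_1 - c_1 + z$, and contains the unique circuit $D$ of $C_1$, which satisfies $|D| \geq 2$ because $C_1$ is a union of bases and hence loop-free. Thus $C_1 - c_1 + c_2$ has rank $d$, its unique circuit is $D$, and removing any one of the $|D| \geq 2$ elements of $D$ yields a basis of $\mathcal{M}$ contained in $C_1 - c_1 + c_2$, confirming that $C_1 - c_1 + c_2$ contains at least two bases.
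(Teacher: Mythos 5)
Your proof is correct, but it follows a genuinely different route from the paper. The paper works with the representations $B+e$ ($B$ a basis, $e$ pivoted in from an adjacent basis), checks the exchange property by a case analysis on pivot labels in the basis graph, and leans on Lemma~\ref{d=2} about common neighbors of vertices at distance two. You instead first identify the supports of $G(\A(I))$ intrinsically as the $(d{+}1)$-subsets of $[n]$ containing at least two bases of $\mathcal{M}$ (your covering remark, which is right: two distinct $d$-element bases inside a $(d{+}1)$-set must union to the whole set and hence be adjacent), and then verify basis exchange for this collection using fundamental circuits --- in particular the fact that a rank-$d$ set of size $d+1$ has a unique circuit $D$, is loop-free when it is a union of bases, and yields a basis upon deletion of any of the $|D|\geq 2$ elements of $D$. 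Both cases of your argument check out: in the case $C_1-c_1\in\mathcal{B}$ the step producing $B^*-y+c_2\in\mathcal{B}$ from $c_2\in B_2\setminus B^*$ is not the exchange axiom verbatim (which would give $B_2-c_2+y\in\mathcal{B}$) but is exactly the symmetric exchange property quoted in the paper's preliminaries, so it is justified; and in the case $C_1-c_1\notin\mathcal{B}$ your observations that $c_1\neq e$, that $z\neq e$, and that $c_1\notin D$ (so $D\subseteq C_1-c_1+c_2$) are all correct. What your approach buys is a cleaner, more structural picture: for a loopless matroid your $\mathcal{B}'$ is precisely the set of bases of the rank-$(d{+}1)$ elongation of $\mathcal{M}$, which makes the matroidal property of $\A(I)$ (and, iterating, of all $J_\ell(I)$) almost immediate and arguably more informative than the paper's pivot-label bookkeeping; what the paper's approach buys is that its basis-graph machinery (especially Lemma~\ref{d=2}) is reused verbatim in the proof of Theorem~\ref{main}.
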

\begin{proof}
Let $\mathcal{M}=([n],\mathcal{B})$ be the  matroid corresponding to $I$.  Recall that we use squarefree monomials in $k[x_1,\ldots,x_n]$  and subsets of $[n]$ interchangeably. Consider two arbitrary distinct  elements $B+e, C+f\in \A(I)$ with $B,C\in \mathcal{B}$,  $e\in \set (B)$, and $f\in\set(C)$. Since $e\in \set (B)$, there exists $B'\in \mathcal{B}$ and $e'\in B\setminus B'$ 
 such that
\[
B+e=B'+e'
\]
We check the exchange property for the elements $B+e$ and $C+f$, that is, for each element 
$b\in (B+e)\setminus (C+f)$, we find an element 
\begin{eqnarray}
\label{element}
c\in (C+f)\setminus (B+e) \,\,\,\,\,\,\text{ such that } \,\,\,\,\,\, B+e-b+c\in \A(I).
\end{eqnarray}
For such an element $b$, we may assume that $b\neq e$. Because if $b=e$, we can proceed by the other presentation, namely $B'+e'$.    By this assumption, we have $b\in B\setminus C $. Therefore, by the exchange property for bases of $\mathcal{B}$, the following set is not empty:
\[
T=\{\tilde{c}\in C\setminus B \:\,\,\, B-b+\tilde{c}\in \mathcal {B}\}.
\]
 There exists  two cases:

\textit{Case 1. } 
First suppose that $e\in T$. So $B_1= B-b+e\in \mathcal{B}$.
\noindent
If $B_1=C$, then $f\neq b$ because $B+e$ and $C+f$ are distinct elements. So on one hand, $f\not\in B$. On the other hand, $f\neq e$ because $f\not\in C$. So the choice of  $c=f$ for which $B+e-b+f=C+f\in \A(I)$, has the  required properties mentioned in~(\ref{element}), and we are done.
Otherwise, if $B_1\neq C$,  there exists $b'\in B_1\setminus C$, and consequently by the exchange property, there exists an element $c\in C\setminus B_1$ such that 
\[
B_2=B_1-b'+c\in \mathcal{B}.
\]
 The distinct  vertices $B_1$ and $B_2$ of the basis graph are  adjacent. This implies that $B+e-b+c=B_1\cup B_2\in \A(I)$. Therefore, this element $c$ is an appropriate choice for~(\ref{element}).

\medskip

\textit{Case 2.} Next suppose that there exists an element $c\in T$ such that $c\neq e$. Thus $B_1=B-b+c\in \mathcal{B}$. If $b=e'$, then the vertex $B_1$ becomes adjacent to $B'$  by $e$ pivoted out and $c$ pivoted in; see the induced subgraph in Figure~\ref{0}. Recall that the edge labels show the pivot steps. Since $B'$ and $B_1$ are adjacent, one has 
\[
B+e-b+c=B+e-e'+c=B'\union B_1\in \A(I),
\]
as required in (\ref{element}). 

\medskip

 If $b\neq e'$, then $B'= B_1-(e'+c)+(e+b)$. Therefore, $d(B,B_2)=2$ because $\{e',c\}$ and $\{e,b\}$ are disjoint sets of cardinality two. So by Lemma~\ref{d=2}, there exist a common neighbor $B_2$ of $B_1$ and $B'$ by $e$ pivoted in, namely $B_2=B_1-e'+e$ or $B_2=B_1-c+e$. In any case $B+e-b+c=B_1\union B_2\in \A(I)$, as desired. 
\end{proof} 
\begin{figure}
\begin{center}
\begin{pspicture}(-3,-0.5)(3,4)
\psdots(-2,0)(2,0)(0,3)
\psline(-2,0)(2,0)(0,3)(-2,0)
 \rput(-2.5,-0.15){$B'$}
 \rput(2.4,-0.15){$B_1$}
 \rput(0,3.4){$B$}
\rput{60}(-1.35,1.65){$ee'$}
\rput{-55}(1.45,1.5){$bc{=}e'c$}
\rput(0,-0.4){$ec$}
  \end{pspicture}
\end{center}
\caption{}\label{0}
\end{figure}

\medskip

Considering the  lexicographical order with respect to $x_1>\cdots>x_n$ on the squarefree monomials in $k[x_1,\ldots,x_n]$ , we have an induced total order on subsets of $[n]$. More precisely,  we set $B>_{lex} C$ for subsets $B,C\subseteq [n]$ exactly when  $m_B>_{lex} m_C$ for squarefree monomials $m_B$ and $m_C$  with   $\supp(m_B)= B$ and $\supp(m_C)= C$ .  For simplicity, we denote  $\{i\}>_{lex}\{j\} $ by $i>_{lex}j$ for $i,j\in [n]$ which is the case exactly when  $i<j$ with ordinary order of integers.

\medskip

\begin{Theorem}
\label{main}
Let $I$ be a matroidal ideal. Then the ideal $J_{\ell}(I)$ generated by the set of $\ell$-th multigraded shifts of $I$ is also a matroidal ideal for all $ \ell=0,\ldots ,\projdim(I)$.
\end{Theorem}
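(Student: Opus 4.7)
The plan is to induct on $\ell$. For $\ell=0$ we have $J_0(I)=I$, which is matroidal by hypothesis. For the inductive step, suppose $J_\ell(I)$ is matroidal. The strategy is to establish the identification
\[
J_{\ell+1}(I)=\A\bigl(J_\ell(I)\bigr);
\]
once this is in hand, Lemma~\ref{label.edge} applied to the matroidal ideal $J_\ell(I)$ immediately yields that $J_{\ell+1}(I)$ is matroidal, completing the induction. In other words, the proof amounts to verifying the informal claim from the introduction that $J_i(I)$ is produced by $i$-fold iteration of the adjacency operation.

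Since $J_\ell(I)$ is matroidal, \cite[Theorem 1.3]{MM} gives it linear quotients with respect to the lexicographical order, and the remark preceding Lemma~\ref{label.edge} then identifies $\A(J_\ell(I))$ with $J_1(J_\ell(I))$. So the real content of the inductive step is the identity $J_{\ell+1}(I)=J_1(J_\ell(I))$. Both sides admit an explicit description via \cite[Lemma 1.5]{HT}: generators of $J_{\ell+1}(I)$ are monomials $m\xb^A$ with $m\in G(I)$, $A\subseteq \set_I(m)$ and $|A|=\ell+1$, whereas generators of $J_1(J_\ell(I))$ are products $u\cdot x_k$ with $u\in G(J_\ell(I))$ and $k\in \set_{J_\ell(I)}(u)$.

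For the inclusion $J_{\ell+1}(I)\subseteq J_1(J_\ell(I))$, given a generator $m\xb^A$ of $J_{\ell+1}(I)$, I would pick any $a\in A$; then $u=m\xb^{A-a}$ lies in $G(J_\ell(I))$ and $m\xb^A=u\cdot x_a$, so everything reduces to verifying that $a\in \set_{J_\ell(I)}(u)$. Conversely, given $ux_k\in G(J_1(J_\ell(I)))$, write $u=m\xb^{A'}$ with $|A'|=\ell$ and $A'\subseteq\set_I(m)$; one then aims to show $k\in \set_I(m)\setminus A'$, possibly after replacing $m$ by a lex-smaller generator of $I$ that still realises $u$.

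The main obstacle is precisely this bookkeeping translating $\set_{J_\ell(I)}$ into matroid-theoretic data of the original matroid $\mathcal{M}$ attached to $I$. Unwinding the colon-ideal definition of $\set_{J_\ell(I)}$ means exhibiting lex-smaller generators of $J_\ell(I)$ with prescribed least common multiples, which amounts to producing specific bases of $\mathcal{M}$ via a controlled sequence of pivots. Here I expect the symmetric exchange property, together with the double-pivot statement of Lemma~\ref{d=2}, to do essentially the same work that they do in the proof of Lemma~\ref{label.edge}: they supply the common neighbours in the basis graph needed to realise each candidate $\lcm$ as $\lcm(u,v)$ for adjacent generators $u,v$ of $J_\ell(I)$.
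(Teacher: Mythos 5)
Your overall strategy coincides with the paper's: induct on $\ell$, identify $J_{\ell+1}(I)$ with the adjacency ideal of $J_\ell(I)$, and invoke Lemma~\ref{label.edge}. But the proposal stops exactly where the real work begins. The inclusion $\A(J_\ell(I))\subseteq J_{\ell+1}(I)$ is the heart of the theorem and is not established; you explicitly defer it (``I expect the symmetric exchange property, together with \ldots Lemma~\ref{d=2}, to do essentially the same work''), and that expectation conceals a genuinely delicate argument. Concretely: if $U=B+e_1+\cdots+e_k$ and $V=C+f_1+\cdots+f_k$ are adjacent generators of $J_\ell(I)$ with $U\cup V=U+v$, you must exhibit a \emph{single} basis $D$ together with $k+1$ elements of $\set(D)$ (computed in the original ideal $I$, with respect to the lex order) realising $U+v$. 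When $v\notin\set(B)$ this forces a change of base point: the paper passes to $B'=B-b+v$ where $b$ is chosen so that every $t\in B$ with $B-t+v\in\mathcal{B}$ satisfies $t>_{lex}v$, deduces $b\in\set(B')$, and then must re-verify $e_i\in\set(B')$ for each $i$. That last step is the subtle one; it uses Lemma~\ref{d=2} twice and, in the case $B'-t_i+e_i\notin\mathcal{B}$, a counting argument on which common neighbours of $B_i$ and $B'$ pivot in $b$ versus $e_i$ to force $B'-v+e_i\in\mathcal{B}$. None of this appears in your outline, and it does not follow formally from Lemma~\ref{label.edge} or the exchange axiom alone.

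A secondary, fixable issue: in your ``easy'' inclusion you reduce to $J_1(J_\ell(I))$ and claim that for \emph{any} $a\in A$ the monomial $u=m\xb^{A-a}$ satisfies $a\in\set_{J_\ell(I)}(u)$. The natural lex-greater witnesses are the generators $m\xb^{A-a'}=u-a'+a$, and these exceed $u$ in the lex order only when $a>_{lex}a'$; so an arbitrary $a$ need not work, though the lex-greatest element of $A$ does. The paper sidesteps this bookkeeping entirely by working with the adjacency ideal directly: $B+e_1+\cdots+e_{\ell+1}$ is visibly the union of the two adjacent generators $B+e_1+\cdots+e_\ell$ and $B+e_1+\cdots+e_{\ell-1}+e_{\ell+1}$, with no computation of $\set_{J_\ell(I)}$ required. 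I would recommend adopting that formulation and then supplying the missing converse inclusion in full.
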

\begin{proof}
Let $\mathcal{M}=([n],\mathcal{B})$ be the matroid corresponding to $I$. So
\[
\mathcal{B}=\{\supp(m)\:\,  m\in G(I)\,\}.
\]
Assume that  elements of $G(I)$ are ordered decreasingly in the lexicographical order with respect to $x_1>\cdots>x_n$. With respect to this order of generators we consider  $\set(m)$ for each $m\in G(I)$.  

\medskip

 We prove the assertion by induction on $\ell$. One has $J_0(I)=I$, and so matroidal. The ideal $J_1(I)$ is matroidal as well by Lemma~\ref{label.edge}.  Let $J_{\ell}(I)$ be matroidal for $\ell \leq k$. 
We show that for $k\geq 1$,  $J_{k+1}(I)$ is  the adjacency ideal of $J_k(I)$, and consequently a matroidal ideal by Lemma~\ref{label.edge}.  

\medskip

 First, consider an element  $U=B+e_1+\cdots+e_{k+1} \in J_{k+1}$  with $B\in \mathcal{B}$ and $e_1,\ldots,e_{k+1}\in \set(B)$. Then $U$ is  the union of adjacent vertices $B+e_1+\cdots+e_k$ and $B+e_1+\cdots+e_{k-1}+e_{k+1}$ of the basis graph of  $J_k(I)$. Thus, $U$ is in the adjacency ideal of $J_k(I)$.
\medskip

 Next we show that the adjacency ideal of $J_{k}(I)$ is a subset of $J_{k+1}(I)$.   Let 
\[
U=B+e_1+\cdots+e_k
\]
 and
\[
 V=C+f_1+\cdots+f_k
\]
 be two elements of $J_k(I)$ with $B,C\in \mathcal{B}$, $e_1,\ldots,e_k \in \set(B)$, and $f_1,\ldots,f_k \in \set(C)$. Suppose that $d(U,V )=1$, that is, they are adjacent vertices in the basis graph of $J_{k}(I)$. We show that $U\cup V$, corresponding to the monomial $lcm(\xb_U,\xb_V)$, is indeed  a union of an element of $D\in \mathcal{B}$ and a subset of cardinality $k+1$ of  $\set(D)$, so belongs to $J_{k+1}(I)$. 

\medskip

We have $d(U,V )=1$. Therefore,    $V=C+f_1+\cdots+f_k$   has exactly one element outside of 
 $U=B+e_1+\cdots+e_k$, say $v$. The element $v$  belongs to some $C'\in \mathcal{B}$.   Hence by the symmetric exchange property there exists $b\in B\setminus C'$ such that
\[
B'=B-b+v \in \mathcal{B}.
\]
 If there exists an element $t\in B$ such that $v>_{lex}t$ and $B-t+v\in \mathcal{B}$, then $B-t+v>_{lex}B$, and consequently $v\in \set(B)$. Thus $U\union V=B+e_1+\cdots+e_k+v\in J_{k+1}(I)$, as desired. So we assume that for each $t\in B$, if  $B-t+v\in \mathcal{B}$, then $t>_{lex}v$. In particular, $b>_{lex}v$. This implies that $B>_{lex} B'$ and $b\in \set(B')$. We will show that $e_i\in \set (B')$ for all $i=1,\ldots,k$. So 
\[
U\union V=B'+b+e_1+\cdots+e_k\in J_{k+1},
\]
 and we are done.

For this purpose, fix $i=1,\ldots,k$. Since $e_i\in \set(B)$, there exists $t_i\in B$ such that $e_i>_{lex}t_i$ and $B_i=B-t_i+e_i\in \mathcal{B}$. If $t_i=b$, then $B_i$ and $B'$ are adjacent vertices in the basis graph of $\mathcal{M}$. See Figure~\ref{adjacent}.  More precisely, $B_i=B'-v+e_i$. Notice that $e_i>_{lex} t_i=b >_{lex} v$. Hence  $B_i>_{lex} B'$ and $e_i\in \set(B')$.

\begin{figure}
\begin{center}
\begin{pspicture}(-3,-0.5)(5,3.5)
\psdots(-1.5,0)(1.5,0)(2.5,3)
\psline(-1.5,0)(1.5,0)(2.5,3)(-1.5,0)
 \rput(-2,-0.15){$B'$}
 \rput(1.85,-0.15){$B_i$}
 \rput(2.8,3.2){$B$}
\rput{35}(0,1.65){$vb$}
\rput{80}(2.3,1.5){$e_it_i$}
\rput(0,-0.4){$ve_i$}
  \end{pspicture}
\end{center}
\caption{}\label{adjacent}
\end{figure}

If $t_i\neq b$, then $\{t_i,v\}$ and $\{e_i,b\}$ are disjoint sets of cardinality two. Therefore, $d(B_i,B')=2$ because $B_i=B'-(t_i+v)+(e_i+b)$. By Lemma~\ref{d=2}, there exists a common neighbor  of $B_i$  and $B'$ in the basis graph by $t_i$ pivoted out, namely $B'-t_i+e_i$ or $B'-t_i+b$.  If we have the common neighbor $B'-t_i+e_i\in \mathcal{B}$, then the desired conclusion follows.  Because $e_i>_{lex} t_i$ implies that $B'-t_i+e_i>_{lex} B'$, and we obtain that $e_i\in \set(B')$. So assume  that $B'-t_i+e_i\not\in \mathcal{B}$. Thus there exists the common neighbor $B'-t_i+b\in \mathcal{B}$  by $t_i$ pivoted out. See the induced subgraph in Figure~\ref{nonadj}. Notice that
\[
B'-t_i+b=(B-b+v)-t_i+b=B+v-t_i.
\]
\begin{figure}
\begin{center}
\begin{pspicture}(-3,-2)(4,3.2)
\psdots(-2,0)(-0.3,-1.5)(2,0)(2.5,2.3)(-0.3,2)
\psline(-2,0)(2.5,2.3)(2,0)
\psline[linewidth=1pt,linestyle=dotted](-2,0)(-0.3,2)(2,0)(-0.3,-1.5)(-2,0)
\psline[linewidth=1pt,linestyle=dotted](-0.3,2)(2.5,2.3)(-0.3,-1.5)
\rput{80}(2.6,1.3){$e_it_i$}
 \rput{20}(-0.3,1.28){$vb$}
 \rput(4,2.6){$B{=}B'{-}v{+}b$}
 \rput(2.45,0){$B_i$}
 \rput(-2.4,0){$B'$}
\rput(-0.8,2.4){$B'{-}t_i{+}b$}
\rput(-0.6,-1.8){$B'{-}v{+}e_i$}
  \end{pspicture}
\end{center}
\caption{}\label{nonadj}
\end{figure}
Recall that we assume that whenever $B-t+v\in \mathcal{B}$ for some $t\in B$, then $t>_{lex} v$. Thus $t_i>_{lex} v$. So far we only have common neighbors of $B_i$ and $B'$ by $b$ pivoted in, namely $B=B'-v+b$ and $B'-t_i+b$. Therefore, we must have the common neighbor $B'-v+e_i$ in $\mathcal{B}$ for which $e_i$ pivoted in.
On the other hand, $e_i>_{lex}     t_i     >_{lex}    v$. Hence also in this case, we have $e_i\in \set(B_i)$.
\end{proof}

\medskip

By the argument applied in proof of Theorem~\ref{main}, we have the following result.
\begin{Corollary}
Let $I$ be a matroidal ideal. Then $J_{\ell}(I)$ is obtained by taking $\ell$ times iterated adjacency ideals starting from $I$.
\end{Corollary}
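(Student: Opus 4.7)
The plan is to proceed by induction on $\ell$ and exploit the fact that the proof of Theorem~\ref{main} did not merely prove that $J_\ell(I)$ is matroidal; along the way it actually identified $J_{k+1}(I)$ with the adjacency ideal $\A(J_k(I))$. So the Corollary is essentially a repackaging of the inductive identity already established there.

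The base case $\ell=0$ is trivial, since $J_0(I)=I$. For $\ell=1$, the statement is the observation made immediately after the definition of the adjacency ideal: when $I$ is generated in a single degree with linear quotients, $J_1(I)$ coincides with $\A(I)$. This applies to $I$ since matroidal ideals are generated in a single degree and have linear quotients by \cite[Theorem 1.3]{MM}.

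For the inductive step with $\ell\geq 2$, assume $J_{\ell-1}(I)=\A^{\ell-1}(I)$, the ideal obtained by iterating $\A$ a total of $\ell-1$ times starting from $I$. Theorem~\ref{main} guarantees that $J_{\ell-1}(I)$ is itself matroidal, hence again generated in a single degree with linear quotients. Inside the proof of Theorem~\ref{main}, the two containments $J_{k+1}(I)\subseteq \A(J_k(I))$ and $\A(J_k(I))\subseteq J_{k+1}(I)$ were both established for $k\geq 1$; applying them with $k=\ell-1$ gives $J_\ell(I)=\A(J_{\ell-1}(I))$. Combining with the induction hypothesis yields
\[
J_\ell(I) \;=\; \A(J_{\ell-1}(I)) \;=\; \A\bigl(\A^{\ell-1}(I)\bigr) \;=\; \A^{\ell}(I),
\]
which is exactly the desired statement.

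I do not anticipate any genuine obstacle here, since all the technical work was already absorbed into Theorem~\ref{main} and Lemma~\ref{label.edge}. The only thing to be careful about is to note explicitly that the adjacency operator can be iterated: this requires that each intermediate ideal $J_k(I)=\A^k(I)$ be generated in a single degree (so that $\A$ is defined on it) and have linear quotients (so that the next $J_1$ coincides with $\A$), both of which follow from the matroidal property supplied by Theorem~\ref{main}.
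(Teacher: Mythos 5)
Your proposal is correct and follows exactly the route the paper intends: the paper proves this corollary in one line by pointing back to the proof of Theorem~\ref{main}, where the identity $J_{k+1}(I)=\A(J_k(I))$ for $k\geq 1$ and the identification $J_1(I)=\A(I)$ were already established. Your write-up simply makes the induction explicit, including the (worthwhile) observation that each intermediate ideal is matroidal and hence has the single-degree and linear-quotients properties needed to iterate $\A$.
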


{}

 \end{document}